\newtheorem{theorem}{Theorem}
\newtheorem{definition}{Definition}
\newtheorem{remark}{Remark}
\begin{document}
\title{Characterizations of the $(h,k,\mu,\nu)-$Trichotomy for Linear Time-Varying Systems}
\author{Ioan-Lucian~Popa, Traian~Ceau\c su, Mihail~Megan}
\date{}

\maketitle

\abstract{The present paper considers a concept of $(h,k,\mu,\nu)-$trichotomy for noninvertible linear time-varying systems in Hilbert spaces. This work provides a characterization for linear time-varying systems that admits a $(h,k,\mu,\nu)-$trichotomy in terms of two coupled systems having a $(h,\mu,\nu)-$dichotomy.
}

\section{Introduction}
In the qualitative theory of difference equations the notions of dichotomy and trichotomy play a vital role. This fact is very well analysed in \cite{alonso} and \cite{hong} where it is proved that the property of exponential trichotomy is necessary in the presence of ergodic solutions of linear differential and difference equations with ergodic perturbations. Besides the classical concepts  of uniform and nonuniform exponential trichotomy (see \cite{elaydi-jang} and reference therein for more details), in \cite{fener}  J. Lopez-Fener and M. Pinto extend the previous concept to the so called  $(h,k)-$trichotomy using  two sequences with positive terms. Recently, a new nonuniform concept of $(\mu,\nu)-$dichotomy is proposed in \cite{bento} with  increasing sequences which go to infinity. These sequences, called growth rates, have been used to extend the classical concepts of exponential and polynomial dichotomy and trichotomy, both for uniform and nonuniform approaches. We enhance the work developed in \cite{babutia}, \cite{chang}, \cite{popa3}, \cite{popa2}, \cite{popa} and \cite{Zhang} and all included references for a detailed discussion considering this approach.

One problem that lies within the main interest in the asymptotic behavior  of the solution of difference equation is the characterization of the trichotomy in terms of dichotomies. In \cite{elaydi-jang} S. Elaydi and K. Janglajew considered (invertible) difference equations on the entire axis $\mathbb{Z}$ and they proved that if the difference equation has an (E-H) trichotomy that also has exponential dichotomy on $\mathbb{Z}_-$ and $\mathbb{Z}_+.$ This result was extended by S. Matucci \cite{matuci} to the concept of $l^p$ trichotomy. Similarly, in \cite{elaydi-jang} is proved that $l^p$ trichotomy on $\mathbb{Z}$ implies $l^p$ dichotomy on $\mathbb{Z}_+$ and on $\mathbb{Z}_-$ and $l^p$ dichotomy on $\mathbb{Z}$ implies a trivial $l^p$ trichotomy. In \cite{hong2} is proved that for almost periodic difference equations the notion of exponential trichotomy on $\mathbb{Z}$ implies exponential dichotomy on $\mathbb{Z}.$   Tacking into consideration  the previous three results, there naturally arises the question about what happens when we consider (noninvertible) difference equations only defined on $\mathbb{Z}_+.$ This represents the main goal of the present paper, to provide a characterizations for linear time varying systems (i.e. difference equations) that are defined only on $\mathbb{Z}_+$ and not assumed to be invertible. Thus, we consider an generalization from $(h,k,\mu,\nu)-$dichotomy to $(h,k,\mu,\nu)-$trichotomy in discrete time  from \cite{Zhang} and we prove that linear time-varying (LTV) systems that admit such a trichotomy can be characterized in terms of  two coupled systems that admit a $(h,\mu,\nu)-$dichotomy in the sense of J. Lopez-Fener and M. Pinto \cite{fener}, i.e. a concept that uses only sequences with positive terms.
\section{Preliminaries}
In this section we introduce our notations and present some definitions and useful details.
We denoted by $\mathbb{Z}$ the set of real integers, $\mathbb{Z}_+$ is the set of all $n\in\mathbb{Z},$ $n\geq 0,$ and $\mathbb{Z}_-$ is the set of all $n\in\mathbb{Z},$ $n\leq 0.$ We also denote by $\Delta$ the set of all pairs of real integers, $(m,n)$
with $m\geq n\geq 0.$ The norm on the Hilbert space $\mathcal{H}$ and on $\mathcal{B}(\mathcal{H})$ the Banach algebra of $\mathcal{H}$ will be denoted by $\|\cdot\|.$ The identity operator on $\mathcal{H}$ is denoted by $I.$

We will be considering  the discrete-time linear time-varying   system
\begin{equation*}\label{A}\tag{$\mathfrak{A}$}
x_{n+1}=A_{n}x_{n},\;\quad n\in\mathbb{Z}_+
\end{equation*}
where $(A_n)_{n\in\mathbb{Z}_+}\subset\mathcal{B}(\mathcal{H})$ is a given sequence. If for every $n\in\mathbb{Z}_+$ the sequence $(A_n)_{n\in\mathbb{Z}_+}$ is invertible, then the LTV system (\ref{A}) is called {\it reversible}.
The {\it state transition matrix} for the LTV system (\ref{A}) is defined as
\begin{equation}\label{eqAmn}
\mathcal{A}_m^n :=\left\{
\begin{array}{l l}
A_{m-1}\cdots  A_n &if\;\; m > n\\
I &if\;\; m=n.\\
\end{array}\right.
 \end{equation}
 It is obvious that the transition matrix satisfies the propagator property, i.e.
 $\mathcal{A}_m^n\mathcal{A}_n^p=\mathcal{A}_m^p,\;\;\text{for all}\;\;(m,n),(n,p)\in\Delta$
 and every solution of the LTV system (\ref{A}) satisfies
 $x_m=\mathcal{A}_m^nx_n,\;\;\text{for all}\;\;(m,n)\in\Delta.$

A sequence $(P_n)_{n\in\mathbb{Z}_+}\subset\mathcal{B}(\mathcal{H})$ is called a {\it projections sequence} if
$(P_n)^2=P_n,\;\;\text{for all}\;\;n\in\mathbb{Z}_+.$
A projections sequence $(P_n)_{n\in\mathbb{Z}_+}$ with the property
$A_{n}P_n=P_{n+1}A_n\;\;\text{for every}\;\;n\in\mathbb{Z}_+$
is called {\it invariant} for the system (\ref{A}). As a consequence of the invariance property we get the following relation that
$\mathcal{A}_m^nP_n=P_m\mathcal{A}_m^n,$ for all $(m,n)\in\Delta.$

An increasing sequence $\mu:\mathbb{Z}_+\to [1,\infty)$ is a {\it growth rate sequence} if $\mu(0)=1$ and $\lim\limits_{n\to+\infty}\mu(n)=+\infty.$
\begin{definition}\label{D: h-k}
The LTV system (\ref{A}) admits a \it $(h,k,\mu,\nu)-$trichotomy if there exist invariant projections $(P_n^i)_{n\in\mathbb{Z}_+}\subset\mathcal{B}(\mathcal{H})$, $i\in\{1,2,3\}$ satisfying
\begin{equation}\label{orthogonal}
P_n^1+P_n^2+P_n^3=I,\;\;\;P_n^iP_n^j=0
\end{equation}
for all $n\in\mathbb{Z}_{+}$ and $i,j\in\{1,2,3\},$ $i\neq j$ and  there exist growth rate sequences $h_n,$ $k_n,$ $\mu_n,$ $\nu_n$ and some positive constants $K>0,$ $a>0,$ $b\geq 0$ and $\varepsilon\geq 0$ such that
\begin{equation}\label{ht1}
\|\mathcal{A}_m^nP_n^1x\|\leq K\left(\frac{h_n}{h_m}\right)^a\mu_n^\varepsilon\|P_n^1x\|,
\end{equation}
\begin{equation}\label{ht2}
\|P_n^2x\|\leq K\left(\frac{k_n}{k_m}\right)^b\nu_m^\varepsilon\|\mathcal{A}_m^nP_n^2x\|,
\end{equation}
\begin{equation}\label{ht3}
\|\mathcal{A}_m^nP_n^3x\|\leq K\left(\frac{h_m}{h_n}\right)^a\mu_n^\varepsilon\|P_n^3x\|,
\end{equation}
\begin{equation}\label{ht4}
\|P_n^3x\|\leq K\left(\frac{k_m}{k_n}\right)^b\nu_m^\varepsilon\|\mathcal{A}_m^nP_n^3x\|,
\end{equation}
for all $(m,n)\in\Delta,$ $x\in\mathcal{H}$ and
the restriction of ${\mathcal{A}_m^n}_{|Ker P_n^i}:Ker P_n^i\to Ker P_m^i$ is an isomorphism for all $(m,n)\in\Delta$ and all $i\in\{2,3\}.$ Moreover, the LTV system (\ref{A}) admits a $(h,k,\mu,\nu)-$dichotomy if admits a $(h,k,\mu,\nu)$-trichotomy with $P_n^3=0,$ for all $n\in\mathbb{Z}_+.$
\end{definition}
The notion of $(h,k,\mu,\nu)-$trichotomy is a natural generalization of the classical concepts of uniform (nonuniform) concepts of exponential and polynomial trichotomy (\cite{li}, \cite{song}). The constants a and b play the role of Lyapunov exponents while $\varepsilon$
measures the nonuniformity of trichotomies. In \cite{Zhang} is pointed out that $a\varepsilon<0$ can simplify the previous expressions (i.e. (\ref{ht1})-(\ref{ht4})), but in this way we can see better how the Lyapunov exponents are involved in this characterization.
One can see that not all the definitions the sequences considered are assumed to be growth rate sequences. For example in \cite{fener}, J. L\'{o}pez-Fenner and M. Pinto simply consider sequences with positive terms. In the same line of reasoning, if we consider in previous definition  $h_n,$ $k_n,$ $\mu_n,$ $\nu_n$ four sequences with positive terms we can denote this notion as FP $(h,k,\mu,\nu)$-trichotomy and for $P_n^3=0$ the notion of FP $(h,k,\mu,\nu)$-dichotomy. Also we can mention a particular case of previous notion that will be used in this paper, i.e. FP $(h,\mu,\nu)-$dichotomy obtained for $h_n=k_n.$
\begin{remark}\label{R: 2-4}
(See, \cite{popa}) One can see that relation (\ref{orthogonal}), the {\it orthogonality property} from Definition \ref{D: h-k} can be rewritten as follows:
\begin{description}
\item[a)] in terms of two projection sequences, i.e. there exists $(Q_n^i)_{n\in\mathbb{Z}_+}\subset\mathcal{B}(\mathcal{H})$, $i\in\{1,2\}$ such that
\begin{equation}\label{ec 2 proiectoir}
Q_n^1Q_n^2=Q_n^2Q_n^1=0,\;\;\text{for all}\;\;n\in\mathbb{Z}_{+}
\end{equation}
where $Q_n^1=P_n^1$ and $Q_n^2=P_n^2+P_n^3.$

\item[b)] in terms of four projection sequences, i.e. there exists four projection sequences $(R_n^i)_{n\in\mathbb{Z}_+}\subset\mathcal{B}(\mathcal{H})$, $i\in\{1,2,3,4\}$ such that
\begin{align}\label{ec 4 proiectori}
&R_n^1+R_n^4=R_n^2+R_n^3=I,\;\;\;R_n^1R_n^2=R_n^2R_n^1=0,\nonumber\\
&R_n^3R_n^4=R_n^4R_n^3,\;\;\text{for all}\;\;n\in\mathbb{Z}_{+}
\end{align}
where $R_n^1=P_n^1,$ $R_n^2=P_n^2,$ $R_n^3=P_n^1+P_n^3$ and $R_n^4=P_n^2+P_n^3.$
\end{description}
\end{remark}
\begin{remark}\label{R: 2}
\begin{description}
\item[a)] If we consider $P_n^1=Q_n^1,$ $P_n^2=Q_n^2,$ $P_n^3=I-Q_n^1-Q_n^2,$ then $(P_n^i),$ $i\in\{1,2,3\}$ are orthogonal projection sequences if and only if $(Q_n^i),$ $i\in\{1,2\}$ are orthogonal projection sequences.

\item[b)] If $(P_n^i),$ $i\in\{1,2,3\}$ are orthogonal projection sequences then $(R_n^i),$ $i\in\{1,2,3,4\}$ defined by $R_n^1=P_n^1,$ $R_n^2=P_n^2,$ $R_n^3=P_n^1+P_n^3$ and $R_n^4=P_n^2+P_n^3$ are also orthogonal. Conversely, we have that $P_n^1=R_n^1,$ $P_n^2=R_n^2$ and $P_n^3=R_n^3R_n^4,$ $n\in\mathbb{Z}_+.$
\end{description}
\end{remark}

\section{Main Results}

We associate to the LTV system (\ref{A}) the system
\begin{equation*}\label{B}\tag{$\mathfrak{B}$}
x_{n+1}=B_{n}x_{n},\;\quad n\in\mathbb{Z}_+
\end{equation*}
with $B_n=\left(\frac{h_{n+1}}{h_n}\right)^{a/2}\left(\frac{k_{n+1}}{k_n}\right)^{b/2}A_n.$ Following (\ref{eqAmn}) we have that the state transition matrix associated to (\ref{B}) satisfies
\begin{equation}\label{eq B}
\mathcal{B}_m^n=\left(\frac{h_m}{h_n}\right)^{a/2}\left(\frac{k_m}{k_n}\right)^{b/2}\mathcal{A}_m^n,\;\;\text{for all}\;\;(m,n)\in\Delta.
\end{equation}
In addition, using the orthogonality property from Definition \ref{D: h-k}, we have that $P_n^1$ and $P_n^2+P_n^3$ are also projections sequences which are invariant for the LTV system (\ref{B}), i.e.
\begin{equation*}
\mathcal{B}_m^nP_n^1=P_n^1\mathcal{B}_m^n
\end{equation*}
and
\begin{equation*}
\mathcal{B}_m^n(P_n^2+P_n^3)=(P_m^2+P_m^3)\mathcal{B}_m^n
\end{equation*}
for all $(m,n)\in\Delta.$

\begin{theorem}\label{T: sysB}
If the LTV system (\ref{A}) is $(h,k,\mu,\nu)-$trichotomic then the LTV system (\ref{B}) is FP $(\tilde{h},\mu,\nu)-$dichotomic, where $\tilde{h}_n=\dfrac{h_n^a}{k_n^b}.$
\end{theorem}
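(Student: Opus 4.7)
My strategy is to use the two projection sequences $Q_n^1:=P_n^1$ and $Q_n^2:=P_n^2+P_n^3$ provided by Remark~\ref{R: 2-4}(a), and to verify each of the two FP $(\tilde h,\mu,\nu)$-dichotomy inequalities for (\ref{B}) directly from (\ref{ht1})--(\ref{ht4}) combined with the rescaling identity (\ref{eq B}). Invariance of $Q_n^1$ and $Q_n^2$ for (\ref{B}) has already been noted in the paragraph preceding the theorem, so only the two norm bounds remain to be checked.

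For the stable estimate I would simply multiply (\ref{ht1}) by the scalar $(h_m/h_n)^{a/2}(k_m/k_n)^{b/2}$ supplied by (\ref{eq B}). The resulting right-hand factor is $(h_n/h_m)^{a/2}(k_m/k_n)^{b/2}$, which, by the very definition $\tilde h_n=h_n^a/k_n^b$, equals $(\tilde h_n/\tilde h_m)^{1/2}$; this gives $\|\mathcal{B}_m^n Q_n^1 x\|\le K(\tilde h_n/\tilde h_m)^{1/2}\mu_n^\varepsilon\|Q_n^1 x\|$, the first dichotomy bound.

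For the unstable estimate I would split $Q_n^2 x=P_n^2 x+P_n^3 x$ and treat each summand separately: using $\mathcal{A}_m^n=(h_n/h_m)^{a/2}(k_n/k_m)^{b/2}\mathcal{B}_m^n$, inequality (\ref{ht4}) translates straight into $\|P_n^3 x\|\le K(\tilde h_n/\tilde h_m)^{1/2}\nu_m^\varepsilon\|\mathcal{B}_m^n P_n^3 x\|$, while (\ref{ht2}) would yield the analogous bound for $P_n^2 x$ with an extra harmless factor $(k_n/k_m)^b\le 1$ (valid because $k$ is increasing, $m\ge n$ and $b\ge 0$). Adding via the triangle inequality then reduces the problem to dominating $\|\mathcal{B}_m^n P_n^2 x\|+\|\mathcal{B}_m^n P_n^3 x\|$ by $\|\mathcal{B}_m^n Q_n^2 x\|$ up to a uniform constant.

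This last step is the main obstacle. Invoking the invariance $\mathcal{B}_m^n P_n^i=P_m^i\mathcal{B}_m^n$ together with the mutual orthogonality $P_m^1 P_m^i=0$ for $i\in\{2,3\}$, I would rewrite $\mathcal{B}_m^n P_n^i x=P_m^i\mathcal{B}_m^n Q_n^2 x$; hence each summand equals $\|P_m^i\mathcal{B}_m^n Q_n^2 x\|\le\|P_m^i\|\,\|\mathcal{B}_m^n Q_n^2 x\|$, and absorbing a uniform bound on $\|P_m^i\|$ (which is $1$ when the projections are orthogonal in the Hilbert-space sense) into the constant would deliver $\|Q_n^2 x\|\le K'(\tilde h_n/\tilde h_m)^{1/2}\nu_m^\varepsilon\|\mathcal{B}_m^n Q_n^2 x\|$, the second dichotomy inequality, and so the result.
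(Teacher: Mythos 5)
Your overall strategy is the same as the paper's: take $S_n^1=P_n^1$ and $S_n^2=P_n^2+P_n^3$, derive the two estimates for $\mathcal{A}_m^n$ from (\ref{ht1}), (\ref{ht2}) and (\ref{ht4}), and then rescale via (\ref{eq B}) to get the factor $(\tilde h_n/\tilde h_m)^{1/2}$; your algebra for the stable bound and for the translation of (\ref{ht4}) is exactly the computation in the paper. The one place you diverge is the step you correctly identify as the main obstacle, namely combining the $P^2$ and $P^3$ estimates into a single bound for $P_n^2+P_n^3$. The paper does this by squaring and invoking the Pythagoras identity $\|(P_n^2+P_n^3)x\|^2=\|P_n^2x\|^2+\|P_n^3x\|^2$ (and likewise at time $m$ after using invariance), absorbing the weaker factor $(k_n/k_m)^{2b}\le 1$ into the stronger one; you instead use the triangle inequality together with $\mathcal{B}_m^nP_n^ix=P_m^i\mathcal{B}_m^nS_n^2x$ and a uniform bound $\|P_m^i\|\le 1$. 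Both variants rest on the same unstated hypothesis: the definition only gives the algebraic orthogonality $P_n^iP_n^j=0$, which by itself guarantees neither the Pythagoras identity nor $\|P_m^i\|=1$; one needs the projections to be orthogonal in the inner-product (self-adjoint) sense, which the paper implicitly assumes when it invokes Pythagoras. So your step is no worse founded than the paper's, at the cost of replacing $K$ by $2K$ in the second inequality (harmless for the FP notion). Two small points: the extra factor you pick up on the $P^2$ term is $(k_n/k_m)^{2b}$, not $(k_n/k_m)^{b}$ (still $\le 1$, so the argument stands); and you should also record, as the paper does, that ${\mathcal{B}_m^n}_{|Ker\,S_n^2}:Ker\,S_n^2\to Ker\,S_m^2$ is an isomorphism, since that is part of the FP $(\tilde h,\mu,\nu)$-dichotomy and does not follow from the two norm bounds alone.
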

\begin{proof}
Take $P_n^1=S_n^1.$ By (\ref{ht1}) we have
\begin{equation*}
\|\mathcal{A}_m^nS_n^1x\|\leq K\left(\frac{h_n}{h_m}\right)^a\mu_n^{\varepsilon}\|S_n^1x\|,
\end{equation*}
for all $(m,n)\in\Delta$ and $x\in\mathcal{H}.$ Using Remark \ref{R: 2} $a)$ if follows that the orthogonality property allow us to consider the following Pythagoras equality
\begin{equation*}
\|(P_n^2+P_n^3)x\|^2=\|P_n^2x\|^2+\|P_n^3x\|^2,
\end{equation*}
for all $n\in\mathbb{Z}_+$ and $x\in\mathcal{H}.$ Thus, we obtain
\begin{align*}
\|(P_n^2+P_n^3)x\|^2&\leq K^2\left(\frac{k_n}{k_m}\right)^{2b}\nu_m^{2\varepsilon}\|P_m^2\mathcal{A}_m^nx\|^2+K^2\left(\frac{k_m}{k_n}\right)^{2b}\nu_m^{2\varepsilon}\|P_m^3\mathcal{A}_m^nx\|^2\\
&\leq K^2\left(\frac{k_m}{k_n}\right)^{2b}\nu_m^{2\varepsilon}\|\mathcal{A}_m^n(P_n^2+P_n^3)x\|^2
\end{align*}
Hence, for $P_n^2+P_n^3=S_n^2$ we have that
\begin{equation*}
\|S_n^2x\|\leq K\left(\frac{k_m}{k_n}\right)^{b}\nu_m^\varepsilon\|\mathcal{A}_m^nS_n^2x\|,
\end{equation*}
for all $(m,n)\in\Delta$ and all $x\in\mathcal{H}.$  We consider the sequence $\tilde{h}_n:\mathbb{Z}_+\to(0,\infty)$ defined by $\tilde{h}_n=\dfrac{h_n^a}{k_n^b}.$ Taking into account relation (\ref{eq B}) we have that
\begin{equation*}
\|\mathcal{B}_m^nS_n^1x\|\leq K\left(\frac{\tilde{h}_n}{\tilde{h}_m}\right)^{1/2}\mu_n^\varepsilon\|S_n^1x\|
\end{equation*}
\begin{equation*}
\|S_n^2x\|\leq K\left(\frac{\tilde{h}_n}{\tilde{h}_m}\right)^{1/2}\nu_m^\varepsilon\|\mathcal{B}_m^nS_n^2x\|
\end{equation*}
for all $(m,n)\in\Delta$ and $x\in\mathcal{H}.$ Further, one can easily see that ${\mathcal{B}_m^n}_{|Ker S_n^2}:Ker S_n^2\to Ker S_m^2$ is an isomorphism for all $(m,n)\in\Delta$ and so it follows that LTV system (\ref{B}) is FP $(\tilde{h},\mu,\nu)-$dichotomic with projections $P_n^1$ and $P_n^2+P_n^3.$ This completes the proof of the theorem.
\end{proof}
Now we associate to (\ref{A}) the LTV system
\begin{equation*}\label{C}\tag{$\mathfrak{C}$}
x_{n+1}=C_{n}x_{n},\;\quad n\in\mathbb{Z}_{+}
\end{equation*}
with $C_n=\left(\frac{h_n}{h_{n+1}}\right)^{a/2}\left(\frac{k_n}{k_{n+1}}\right)^{b/2}A_n.$ Obvious, the state transition matrix $\mathcal{C}_m^n$ associated to the LTV system (\ref{C}) checks
\begin{equation}\label{eq C}
\mathcal{C}_m^n=\left(\frac{h_n}{h_m}\right)^{a/2}\left(\frac{k_n}{k_m}\right)^{b/2}\mathcal{A}_m^n,\;\;\text{for all}\;\;(m,n)\in\Delta.
\end{equation}
Also,
one can see that $P_n^2$ and $P_n^1+P_n^3$ are projections sequences which are invariant for the system (\ref{C}). Moreover, we have that LTV systems (\ref{B}) and (\ref{C}) are couplet together, i.e.
\begin{equation*}
\mathcal{C}_m^n=\left(\frac{h_n}{h_m}\right)^a\left(\frac{k_n}{k_m}\right)^b\mathcal{B}_m^n, \;\;\text{for all}\;\;(m,n)\in\Delta.
\end{equation*}
\begin{theorem}\label{T: sysC}
If the LTV system (\ref{A}) is $(h,k,\mu,\nu)-$trichotomic then the LTV system (\ref{C}) is FP $(\bar{h},\mu,\nu)-$dichotomic, where $\bar{h}_n=\dfrac{1}{\tilde{h}_n}.$
\end{theorem}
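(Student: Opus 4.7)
The proof should proceed in direct parallel with Theorem \ref{T: sysB}, but with a different splitting of the trichotomy data: the plan is to take the stable projection for system (\ref{C}) to be $T_n^1 = P_n^1 + P_n^3$ (as foreshadowed in the paragraph preceding the statement) and the unstable one to be $T_n^2 = P_n^2$. This reassignment is forced by the fact that the rescaling defining $\mathcal{C}_m^n$ is the reciprocal of the one defining $\mathcal{B}_m^n$ (compare (\ref{eq B}) with (\ref{eq C})), so under passage to (\ref{C}) the central direction $P_n^3$ lines up with the stable direction rather than with the unstable one. The algebraic engine of the whole argument is the identity
\[
\left(\frac{\bar{h}_n}{\bar{h}_m}\right)^{1/2}=\left(\frac{h_m}{h_n}\right)^{a/2}\left(\frac{k_n}{k_m}\right)^{b/2},
\]
which is exactly the factor we want to see in the target FP dichotomy bounds for (\ref{C}).

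For the unstable direction $T_n^2=P_n^2$ the route is short: I would start from (\ref{ht2}) and substitute $\mathcal{A}_m^n=(h_m/h_n)^{a/2}(k_m/k_n)^{b/2}\mathcal{C}_m^n$ read off from (\ref{eq C}). The $k$-factors collapse from $(k_n/k_m)^b(k_m/k_n)^{b/2}$ to $(k_n/k_m)^{b/2}$, and the right-hand side becomes exactly $K(\bar{h}_n/\bar{h}_m)^{1/2}\nu_m^\varepsilon\|\mathcal{C}_m^nP_n^2x\|$, as required.

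For the stable direction $T_n^1=P_n^1+P_n^3$ I would estimate $\|\mathcal{C}_m^nP_n^1x\|$ and $\|\mathcal{C}_m^nP_n^3x\|$ separately using (\ref{ht1}) and (\ref{ht3}) together with (\ref{eq C}), and then combine them via the Pythagoras identity
\[
\|\mathcal{C}_m^n(P_n^1+P_n^3)x\|^2=\|\mathcal{C}_m^nP_n^1x\|^2+\|\mathcal{C}_m^nP_n^3x\|^2,
\]
which is justified by the orthogonality of $P_n^1$ and $P_n^3$ (Remark \ref{R: 2}) together with the invariance of each $P_n^i$ under $\mathcal{C}_m^n$, itself automatic because $\mathcal{C}_m^n$ differs from $\mathcal{A}_m^n$ only by a positive scalar. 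The $P_n^3$ piece emerges with exactly the desired factor $(\bar{h}_n/\bar{h}_m)^{1/2}$, while the $P_n^1$ piece emerges with $(h_n/h_m)^{3a/2}(k_n/k_m)^{b/2}$.

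The only genuinely non-mechanical step, and hence the main obstacle, is reconciling that stray $P_n^1$-factor with the target. I would dominate $(h_n/h_m)^{3a/2}\leq(h_m/h_n)^{a/2}$, which rearranges to $(h_n/h_m)^{2a}\leq 1$ and holds because $h$ is a growth rate sequence (hence increasing) and $a>0$. With that elementary monotonicity in hand, both stable contributions sit under the common factor $K(\bar{h}_n/\bar{h}_m)^{1/2}\mu_n^\varepsilon$, Pythagoras delivers the desired $T_n^1$-bound, and the isomorphism of ${\mathcal{C}_m^n}_{|Ker P_n^2}:Ker P_n^2\to Ker P_m^2$ transfers trivially from the corresponding property of $\mathcal{A}_m^n$ via (\ref{eq C}). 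Together these confirm FP $(\bar{h},\mu,\nu)$-dichotomy of (\ref{C}) with projections $P_n^1+P_n^3$ and $P_n^2$.
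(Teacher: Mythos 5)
Your proposal is correct and follows essentially the same route as the paper: the same projections $T_n^1=P_n^1+P_n^3$, $T_n^2=P_n^2$, the same Pythagoras splitting of the stable part into the $P_n^1$ and $P_n^3$ contributions (with the domination $(h_n/h_m)^{a}\leq (h_m/h_n)^{a}$ that the paper uses implicitly made explicit), and the same rescaling via (\ref{eq C}). The only cosmetic difference is that you apply the scalar factor from (\ref{eq C}) before combining the two pieces, whereas the paper first bounds $\|\mathcal{A}_m^nT_n^1x\|$ and rescales afterwards; the two orders are equivalent.
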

\begin{proof}
Proceeding as in the proof of Theorem \ref{T: sysB}, we consider the projection sequences $T_n^1=P_n^1+P_n^3$ and $T_n^2=P_n^2.$ It follows from Remark \ref{R: 2} $a)$
and Pythagoras equality, i.e.
\begin{equation*}
\|(P_n^1+P_n^3)x\|^2=\|P_n^1x\|^2+\|P_n^3x\|^2
\end{equation*}
that for all $(m,n)\in\Delta$ and $x\in\mathcal{H}$ we have
\begin{equation*}
\|\mathcal{A}_m^nT_n^1x\|\leq K\left(\frac{h_m}{h_n}\right)^a\mu_n^{\varepsilon}\|T_n^1x\|
\end{equation*}
\begin{equation*}
\|T_n^2x\|\leq K\left(\frac{k_n}{k_m}\right)^{b}\nu_m^\varepsilon\|\mathcal{A}_m^nT_n^2x\|.
\end{equation*}
We consider the sequence $\bar{h}_n:\mathbb{Z}_+\to(0,\infty)$ defined by $\bar{h}_n=\dfrac{1}{\tilde{h_n}}.$ From equation (\ref{eq C}) if follows that
\begin{equation*}
\|\mathcal{C}_m^nT_n^1x\|\leq K\left(\frac{\bar{h}_n}{\bar{h}_m}\right)^{1/2}\mu_n^\varepsilon\|T_n^1x\|
\end{equation*}
and
\begin{equation*}
\|T_n^2x\|\leq K\left(\frac{\bar{h}_n}{\bar{h}_m}\right)^{1/2}\nu_m^\varepsilon\|\mathcal{C}_m^nT_n^2x\|
\end{equation*}
for all $(m,n)\in\Delta$ and $x\in\mathcal{H}.$ In addition, ${\mathcal{C}_m^n}_{|Ker T_n^2}:Ker T_n^2\to Ker T_m^2$ is an isomorphism for all $(m,n)\in\Delta.$ This allows us to show that LTV system (\ref{C}) admits a FP $(\bar{h},\mu,\nu)-$dichotomy with projection sequences $P_n^2$ and $P_n^1+P_n^3.$
\end{proof}
\begin{remark}\label{R: 8}
Using $(h,k,\mu,\nu)-$trichotomy we have that there exists the projection sequences $(S_n^i)_{n\in\mathbb{Z}_+}\subset\mathcal{B}(\mathcal{H}),$ $(T_n^i)_{n\in\mathbb{Z}_+}\subset\mathcal{B}(\mathcal{H}),$ $i\in\{1,2\}$ satisfying the following properties
\begin{equation*}
S_n^1+S_n^2=I,\;\;T_n^1+T_n^2=I;\;\;S_n^1S_n^2=S_n^2S_n^1=0,\;\;T_n^1T_n^2=T_n^2T_n^1=0;
\end{equation*}
\begin{equation*}
S_n^1T_n^1=T_n^1S_n^1=S_n^1,\;\;S_n^2T_n^1=T_n^1S_n^2=S_n^2-T_n^2=T_n^1-S_n^1,\;\;T_n^2S_n^2=S_n^2T_n^2=T_n^2,\;\;T_n^2S_n^1=S_n^1T_n^2=0.
\end{equation*}
for all $n\in\mathbb{Z}_+.$
It is obvious that if we consider $Q_n^1=S_n^1$ and $Q_n^2=T_n^2$ then we have that $Q_n^1$ and $Q_n^2$ are orthogonal.
\end{remark}
%
\begin{remark}\label{R: 9}
Using $(h,k,\mu,\nu)-$trichotomy property and Theorems \ref{T: sysB} and \ref{T: sysC} we conclude that system (\ref{A}) is reducible to the coupled systems (\ref{B}) and (\ref{C}) such that
\begin{equation}\label{eq1 COU}
\|\mathcal{B}_m^nS_n^1x\|\leq K\left(\frac{\tilde{h}_n}{\tilde{h}_m}\right)^{1/2}\mu_n^\varepsilon\|S_n^1x\|
\end{equation}
\begin{equation}\label{eq2 COU}
\|S_n^2x\|\leq K\left(\frac{\tilde{h}_n}{\tilde{h}_m}\right)^{1/2}\nu_m^\varepsilon\|\mathcal{B}_m^nS_n^2x\|
\end{equation}
\begin{equation}\label{eq3 COU}
\|\mathcal{C}_m^nT_n^1x\|\leq K\left(\frac{\bar{h}_n}{\bar{h}_m}\right)^{1/2}\mu_n^\varepsilon\|T_n^1x\|
\end{equation}
\begin{equation}\label{eq4 COU}
\|T_n^2x\|\leq K\left(\frac{\bar{h}_n}{\bar{h}_m}\right)^{1/2}\nu_m^\varepsilon\|\mathcal{C}_m^nT_n^2x\|
\end{equation}
and ${\mathcal{B}_m^n}_{|Ker S_n^2}:Ker S_n^2\to Ker S_m^2,$ ${\mathcal{C}_m^n}_{|Ker T_n^2}:Ker T_n^2\to Ker T_m^2$ are isomorphisms for all $(m,n)\in\Delta$ and $x\in\mathcal{H}.$
\end{remark}
\begin{theorem}\label{T: sysAh-trich}
If LTV coupled systems (\ref{B}) and (\ref{C}) admits  FP $(\tilde{h},\mu,\nu)-$dichotomy respectively FP $(\bar{h},\mu,\nu)-$dichotomy then the LTV system (\ref{A})  admits a $(h,k,\mu,\nu)-$trichotomy.
\end{theorem}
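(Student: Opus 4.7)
The plan is to reverse the constructions in Theorems \ref{T: sysB} and \ref{T: sysC}, producing three trichotomy projections for $(\mathfrak{A})$ out of the two pairs of dichotomy projections given for $(\mathfrak{B})$ and $(\mathfrak{C})$. Guided by Remark \ref{R: 8}, I will set
\begin{equation*}
P_n^1 = S_n^1, \qquad P_n^2 = T_n^2, \qquad P_n^3 = I - S_n^1 - T_n^2,
\end{equation*}
so that $P_n^3 = T_n^1 S_n^2 = S_n^2 T_n^1$ by the algebraic compatibility of the two pairs collected in Remark \ref{R: 8}. A short computation using those identities shows that the $P_n^i$ are pairwise orthogonal projections summing to $I$. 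Invariance of $P_n^1$ and $P_n^2$ for $(\mathfrak{A})$ follows from invariance of $S_n^1$ for $(\mathfrak{B})$ and of $T_n^2$ for $(\mathfrak{C})$, since $\mathcal{B}_m^n$ and $\mathcal{C}_m^n$ differ from $\mathcal{A}_m^n$ only by a nonzero positive scalar; invariance of $P_n^3 = I - P_n^1 - P_n^2$ is then automatic.

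The four growth estimates (\ref{ht1})--(\ref{ht4}) then fall out by reversing the scalar bookkeeping of the proofs of Theorems \ref{T: sysB} and \ref{T: sysC}. Using (\ref{eq B}) together with the identity $(\tilde h_n/\tilde h_m)^{1/2} = (h_n/h_m)^{a/2}(k_m/k_n)^{b/2}$, the stable bound (\ref{eq1 COU}) for $(\mathfrak{B})$ converts directly into (\ref{ht1}); the unstable bound (\ref{eq4 COU}) for $(\mathfrak{C})$ combined with (\ref{eq C}) and $(\bar h_n/\bar h_m)^{1/2} = (k_n/k_m)^{b/2}(h_m/h_n)^{a/2}$ yields (\ref{ht2}). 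For the center direction I would write $P_n^3 x = T_n^1(S_n^2 x)$ and apply (\ref{eq3 COU}) to obtain (\ref{ht3}), and dually $P_n^3 x = S_n^2(T_n^1 x)$ with (\ref{eq2 COU}) to obtain (\ref{ht4}); the rate factors telescope cleanly because $\bar h = 1/\tilde h$.

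What remains is to show that $\mathcal{A}_m^n$ restricts to an isomorphism $Ker P_n^i \to Ker P_m^i$ for $i \in \{2,3\}$, and this is where I expect the main difficulty. The case $i=2$ is immediate, since $Ker P_n^2 = Ker T_n^2$ and $\mathcal{C}_m^n|_{Ker T_n^2}$ is an isomorphism by hypothesis, which passes to $\mathcal{A}_m^n$ after rescaling. For $i=3$, the space $Ker P_n^3 = Im S_n^1 \oplus Im T_n^2$ decomposes into two $\mathcal{A}_m^n$-invariant summands: the isomorphism on $Im S_n^1 = Ker S_n^2$ is handed over by the hypothesis on $(\mathfrak{B})$; on $Im T_n^2$, injectivity of $\mathcal{A}_m^n$ is immediate from the just-proven lower bound (\ref{ht2}), while surjectivity onto $Im T_m^2$ has to be extracted from the isomorphism of $\mathcal{C}_m^n$ on $Ker T_n^2$ by cutting down its inverse with the identities $T_n^2 = S_n^2 T_n^2$ and $S_n^1 T_n^2 = 0$ from Remark \ref{R: 8}. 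Once that surjectivity is secured, the $P_n^i$ furnish the required $(h,k,\mu,\nu)$-trichotomy for $(\mathfrak{A})$.
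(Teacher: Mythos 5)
Your proposal is correct and follows essentially the same route as the paper: the same choice of projections $P_n^1=S_n^1$, $P_n^2=T_n^2$, $P_n^3=T_n^1S_n^2=S_n^2T_n^1$, and the same scalar bookkeeping via (\ref{eq B}), (\ref{eq C}) to recover (\ref{ht1})--(\ref{ht4}) from (\ref{eq1 COU})--(\ref{eq4 COU}). The only difference is that you attempt to justify the isomorphism condition on $Ker P_n^2$ and $Ker P_n^3$ (correctly isolating the surjectivity of $\mathcal{A}_m^n$ on $Im\,T_n^2$ as the delicate point), whereas the paper simply asserts this property without argument.
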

\begin{proof}
Take $P_n^1=S_n^1.$ By (\ref{eq1 COU})  we have that
\begin{align*}
\|\mathcal{A}_m^nP_n^1x\|&=\left(\frac{h_n^a}{h_m^a}\right)^{1/2}\left(\frac{k_n^b}{k_m^b}\right)^{1/2}\|\mathcal{B}_m^nS_n^1x\|\\
&\leq \left(\frac{h_n^a}{h_m^a}\right)^{1/2}\left(\frac{k_n^b}{k_m^b}\right)^{1/2} K \left(\frac{h_n^a}{k_n^b}\right)^{1/2}\left(\frac{k_m^b}{h_m^a}\right)^{1/2}\mu_n^\varepsilon\|S_n^1x\|\\
&=K \left(\frac{h_n}{h_m}\right)^a\mu_n^\varepsilon\|P_n^1x\|
\end{align*}
for all $(m,n)\in\Delta$ and $x\in\mathcal{H}.$ From (\ref{eq4 COU}), considering $P_n^2=T_n^2$, we obtain
\begin{align*}
\|P_n^2x\|&\leq K \left(\frac{k_n^b}{h_n^a}\right)^{1/2}\left(\frac{h_m^a}{k_m^b}\right)^{1/2}\nu_m^\varepsilon\|\mathcal{C}_m^nT_n^2x\|\\
&= K \left(\frac{k_n^b}{h_n^a}\right)^{1/2}\left(\frac{h_m^a}{k_m^b}\right)^{1/2}\nu_m^\varepsilon \left(\frac{h_n^a}{h_m^a}\right)^{1/2}\left(\frac{k_n^b}{k_m^b}\right)^{1/2}\|\mathcal{A}_m^nP_n^2x\|\\
&=K \left(\frac{k_n}{k_m}\right)^b\nu_m^\varepsilon\|\mathcal{A}_m^nP_n^2x\|\\
\end{align*}
for all $(m,n)\in\Delta$ and $x\in\mathcal{H}.$ Now, we consider $P_n^3=T_n^1S_n^2.$ By Remark \ref{R: 8}, (\ref{eq2 COU}) and (\ref{eq3 COU}) we obtain
\begin{align*}
\|\mathcal{A}_m^nP_n^3x\|&=\|\mathcal{A}_m^nT_n^1S_n^2x\|=\left(\frac{h_m^a}{h_n^a}\right)^{1/2}\left(\frac{k_m^b}{k_n^b}\right)^{1/2}\|\mathcal{C}_m^nT_n^1S_n^2x\|\\
&\leq \left(\frac{h_m^a}{h_n^a}\right)^{1/2}\left(\frac{k_m^b}{k_n^b}\right)^{1/2} K \left(\frac{k_n^b}{h_n^a}\right)^{1/2}\left(\frac{h_m^a}{k_m^b}\right)^{1/2}\mu_n^\varepsilon\|T_n^1S_n^2x\|\\
&=K \left(\frac{h_m}{h_n}\right)^a\mu_n^\varepsilon\|P_n^3x\|
\end{align*}
and, for $P_n^3=S_n^2T_n^1$, we have
\begin{align*}
\|P_n^3x\|&=\|S_n^2T_n^1x\|\leq K \left(\frac{h_n^a}{k_n^b}\right)^{1/2}\left(\frac{k_m^b}{h_m^a}\right)^{1/2}\nu_m^\varepsilon\|\mathcal{B}_m^nS_n^2T_n^1x\|\\
&= K \left(\frac{h_n^a}{k_n^b}\right)^{1/2}\left(\frac{k_m^b}{h_m^a}\right)^{1/2}\nu_m^\varepsilon \left(\frac{h_m^a}{h_n^a}\right)^{1/2}\left(\frac{k_m^b}{k_n^b}\right)^{1/2}\|\mathcal{A}_m^nP_n^3x\|\\
&=K \left(\frac{k_m}{k_n}\right)^b\nu_m^\varepsilon\|\mathcal{A}_m^nP_n^3x\|\\
\end{align*}
for all $(m,n)\in\Delta$ and $x\in\mathcal{H}.$ Moreover, ${\mathcal{A}_m^n}_{|Ker P_n^i}:Ker P_n^i\to Ker P_m^i$ is an isomorphism for all $(m,n)\in\Delta$ and all $i\in\{2,3\}.$
This implies that LTV system (\ref{A}) is  $(h,k,\mu,\nu)-$trichotomic, which concludes our proof.
\end{proof}
Combining Theorems \ref{T: sysB}, \ref{T: sysC} and \ref{T: sysAh-trich} we have the following
\begin{theorem}

The LTV system (\ref{A}) is $(h,k,\mu,\nu)-$trichotomic with projection sequences $(P_n^i),$ $i\in\{1,2,3\}$ if and only if
\begin{description}
\item[a)] (\ref{B}) if FP $(\tilde{h},\mu,\nu)$-dichotomic with projection sequences $(S_n^i),$ $i\in\{1,2\};$

\item[b)] (\ref{C}) if FP $(\bar{h},\mu,\nu)$-dichotomic with projection sequences $(T_n^i),$ $i\in\{1,2\}.$
\end{description}
\end{theorem}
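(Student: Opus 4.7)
The plan is to treat this theorem as a direct packaging of Theorems \ref{T: sysB}, \ref{T: sysC}, and \ref{T: sysAh-trich}, since the substantive estimates have already been carried out. The statement is an equivalence, so I would split the proof into the two implications and invoke one theorem for each direction.

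For the forward implication, I would assume that (\ref{A}) is $(h,k,\mu,\nu)$-trichotomic with projection sequences $(P_n^i)$, $i\in\{1,2,3\}$, and then immediately apply Theorem \ref{T: sysB} to conclude that (\ref{B}) is FP $(\tilde{h},\mu,\nu)$-dichotomic with $S_n^1=P_n^1$ and $S_n^2=P_n^2+P_n^3$, and Theorem \ref{T: sysC} to conclude that (\ref{C}) is FP $(\bar{h},\mu,\nu)$-dichotomic with $T_n^1=P_n^1+P_n^3$ and $T_n^2=P_n^2$. These are exactly the sequences advertised in items a) and b).

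For the reverse implication I would simply quote Theorem \ref{T: sysAh-trich}: given the two dichotomies of (\ref{B}) and (\ref{C}) with projection sequences $(S_n^i)$ and $(T_n^i)$, that theorem constructs the trichotomy of (\ref{A}) with $P_n^1=S_n^1$, $P_n^2=T_n^2$, and $P_n^3=T_n^1 S_n^2=S_n^2 T_n^1$, where the equality of the two expressions for $P_n^3$ is provided by Remark \ref{R: 8}.

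The only point worth a sentence of comment—hardly a real obstacle—is internal consistency of the projection identifications across the two directions: if one starts from $(P_n^i)$, forms $(S_n^i)$ and $(T_n^i)$ as in the forward direction, and then re-enters Theorem \ref{T: sysAh-trich}, the recovered projections must coincide with the original ones. This is automatic from the orthogonality of $(P_n^i)$, since $T_n^1 S_n^2=(P_n^1+P_n^3)(P_n^2+P_n^3)=P_n^3$, while $S_n^1=P_n^1$ and $T_n^2=P_n^2$ by definition. With this remark in hand the equivalence is immediate and the proof is essentially a one-line citation of the three preceding theorems.
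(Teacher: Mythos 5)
Your proposal is correct and matches the paper exactly: the paper states this theorem as an immediate consequence of combining Theorems \ref{T: sysB}, \ref{T: sysC} and \ref{T: sysAh-trich}, with the forward direction given by the first two and the converse by the third. Your additional remark on the consistency of the projection identifications (in particular $T_n^1S_n^2=P_n^3$) is a sensible, if implicit, completion of the same argument.
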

\begin{remark}
The theorems from this article include and generalize the case considered in \cite{lapadat}. For the continuous case of evolution operators we can refer to \cite{kovacs}.
\end{remark}

\section{Conclusion}

In this paper, we have considered the problem of $(h,k\mu,\nu)-$trichotomy for (noninvertible) linear time-varying systems in Hilbert spaces.
It is proved that noninvertible LTV systems defined on $\mathbb{Z}_{+}$ that admit a $h,k,\mu,\nu)$-trichotomy can be decomposed into two coupled systems having
$(h,\mu,\nu)$-dichotomy.

\begin{flushleft}                                                     
     Ioan-Lucian Popa\\
     Department of Mathematics  \\
     "1 Decembrie 1918" University of Alba Iulia \\
     510009-Alba Iulia, Romania \\
     email: {\em lucian.popa@uab.ro}
\end{flushleft}

\begin{flushleft}                                                     
     Traian Ceau\c su\\
     Department of Mathematics  \\
     West University of Timi\c soara, \\
     300223-Timi\c soara, \\
     email: {\em ceausu@math.uvt.ro}
\end{flushleft}

\begin{flushleft}                                                     
     Mihail Megan\\
     Academy of Romanian Scientists  \\
     050094 Bucharest, Romania \\
     email: {\em megan@math.uvt.ro}
\end{flushleft}
\end{document}